\newtheorem{theorem}{Theorem}[section]
\newtheorem{lemma}[theorem]{Lemma}
\theoremstyle{definition}
\newtheorem{definition}[theorem]{Definition}
\numberwithin{equation}{section}
\begin{document}

\setcounter{page}{1}

\title[Dual continuous $K$-Frames in Hilbert spaces]{Dual continuous $K$-Frames in Hilbert spaces}

\author[Mohamed Rossafi, Brahim Moalige, Hamid Faraj, Abdeslam Touri and Samir Kabbaj]{Mohamed Rossafi$^1$$^{\ast}$, Brahim Moalige$^2$, Hamid Faraj$^2$, Abdeslam Touri$^2$ \MakeLowercase{and} Samir Kabbaj$^2$}

\address{$^{1}$Department of Mathematics, Faculty of Sciences Dhar El Mahraz, University Sidi Mohamed Ben Abdellah, Fes, Morocco}
\email{\textcolor[rgb]{0.00,0.00,0.84}{rossafimohamed@gmail.com}}

\address{$^{2}$Department of Mathematics, University of Ibn Tofail, B.P. 133, Kenitra, Morocco}
\email{\textcolor[rgb]{0.00,0.00,0.84}{bmoalige@hotmail.com}}
\email{\textcolor[rgb]{0.00,0.00,0.84}{farajham19@gmail.com}}
\email{\textcolor[rgb]{0.00,0.00,0.84}{touri.abdo68@gmail.com}}
\email{\textcolor[rgb]{0.00,0.00,0.84}{samkabbaj@yahoo.fr}}

\subjclass[2010]{Primary 42C15; Secondary 42C40, 41A58.}

\keywords{Continuous frame, Continuous $K$-frame, dual continuous $K$-frame.}

\date{Received:; 
\newline \indent $^{*}$Corresponding author}

\begin{abstract}
Frame theory is recently an active research area in mathematics, computer science and engineering with many exciting applications in a variety of different fields. This theory has been generalized rapidly and various generalizations of frames in Hilbert spaces. In this papers we study the notion of dual continuous $K$-frames in Hilbert spaces. Also we etablish some new properties.  
\end{abstract} \maketitle

\section{Introduction}
A frame is a set of vectors in a Hilbert space that can be used to reconstruct each vector in the space from its inner products with the frame vectors. These inner products are generally called the frame coefficients of the vector. But unlike an
orthonormal basis each vector may have infinitely many different representations in terms of its frame coefficients.

Introduced by Duffin and Schaeffer in 1952 \cite{1} to study some deep problems in nonharmonic Fourier series, theory of frame in Hilbert space has grown rapidly. After the fundamental paper \cite{2} by Daubechies, Grossman and Meyer, frames
theory began to be widely used, particularly in the more specialized context of wavelet frames and Gabor frames \cite{3}. 

A discrete frame in a separable Hilbert space $\mathcal{H}$ is a sequence $\{f_{i}\}_{i\in I}$  for which there exist positive constants $A, B > 0$ called frame bounds such that
\begin{equation*}
	A\|x\|^{2}\leq\sum_{i\in I}|\langle x,f_{i}\rangle|^{2}\leq B\|x\|^{2}, \;\forall x\in\mathcal{H}.
\end{equation*}

The continuous frames has been defined by Ali, Antoine and Gazeau \cite{4}, called these kinds frames, frames associated with measurable space. For more details, the reader can refer \cite{5}. The concept of continuous $K$-frame in Hilbert space have been introduced in \cite{c-k-frame}.

Many generalizations of the concept of frame have been defined in Hilbert Spaces and Hilbert $C^{\ast}$-modules \cite{r1, r11, r2, r3, r4, r5, r6, r7, r10, r8, r9}.

In this papers, we characterize the concept of dual continuous $K$-frames in Hilbert spaces and we give some new properties.
\section{Preliminaries}
Let $X$ be a Banach space, $(\Omega,\mu)$ a measure space, and function $f:\Omega\to X$ a measurable function. Integral of the Banach-valued function $f$ has defined Bochner and others. Most properties of this integral are similar to those of the integral of real-valued functions. 

Let $(\Omega,\mu)$ be a measure space, let $H$ and $K$ be two separables Hilbert Spaces, we denote $B(H,K)$ the collection of all bounded linear operators from $H$ to $K$, as well $B(H,H)$ is abbreaviated to $B(H)$.\\
For $T \in B(H,K)$, we use the notation $\mathcal{R}(T)$ and $\mathcal{N}(T)$ to denote respectively the range and the null space of $T$.

\begin{definition} \cite{5} \label{d1}
	Let $H$ be a complex Hilbert space, and $(\Omega,\mu)$ be a measure space with positive measure $\mu$.\\
	A map $ F : \Omega \longrightarrow H$ is called a continuous frame with respect to $(\Omega,\mu)$ if :
	\begin{itemize}
		\item [1]- $F$ is weakly measurable, ie: $ \forall f\in H, \quad w \longrightarrow \langle f,F(\omega)\rangle$ is a measurable fonction on $\Omega$.
		\item [2]- There exists two constants $A,B > 0$ such that :
		\begin{equation}\label{d1eq1}
		A\|f\|^{2}\leq \int_{\Omega}|\langle f,F(\omega)\rangle |^{2}d\mu(\omega)\leq B\|f\|^{2} \quad \forall f\in H.
		\end{equation}
	\end{itemize}
\end{definition}
Let $F$ be a continuous frame, the analysis operator $T$ is defined by :
\begin{align*}
T: &H \longrightarrow L^{2}(\Omega)\\
& f \longrightarrow \{\langle f,F(\omega)\rangle \}_{\omega \in \Omega}
\end{align*}
The adjoint operator of $T$, called synthesis operator, is defined by :
\begin{align*}
T^{\ast}: &L^{2}(\Omega) \longrightarrow H\\
& x \longrightarrow \int_{\Omega}x(\omega)F(\omega)d\mu(\omega)
\end{align*}
The frame operator of the continuous frame $F$ is defined by : $S=T^{\ast}T$ such that, is bounded and invertible.\\
Recall that a continuous Bessel sequence $G$ is a dual continuous frame of $F$ if :
\begin{equation*}
f=\int_{\Omega}\langle f,G(\omega)\rangle F(\omega)d\mu (\omega) \qquad \forall f\in H
\end{equation*}
We have : 
\begin{equation*}
f=\int_{\Omega}\langle f,S^{-1}_{F}F(\omega)\rangle F(\omega)d\mu (\omega) \qquad \forall f\in H
\end{equation*}
This show that $S^{-1}_{F}F$ is a dual continuous frame of $F$, called the canonical dual continuous frame of $F$.
\begin{definition}\cite{c-k-frame} \label{d2}
Let $K\in B(H)$, a map $F:\Omega \longrightarrow H$ is said to be a continuous K-frame, if there exists a constants $0<A<B<\infty$ such that :
\begin{equation*}
A\|K^{\ast}f\|^{2}\leq \int_{\Omega}|\langle f,F(\omega)\rangle |^{2}d\mu(\omega)\leq B\|f\|^{2} \quad \forall f\in H.
\end{equation*}
The constants $A$ and $B$ are called the lower and upper continuous K-frame bounds.\\
If $A=B$, $F$ is called a  tight continuous K-frame.\\
If $A=B=1$, $F$ is called the parseval continuous K-frame.\\
If \eqref{d1eq1} holds right, $F$ is called continuous K-Bessel sequence. 
\end{definition}
\begin{definition}\cite{Tousi}
	A Bessel mapping $F$ is said to be $L^{2}$-independent if $\int_{\Omega}\phi(\omega)F(\omega)d\mu(\omega)=0$ for $\phi\in L^{2}(\Omega)$, implies that $\phi=0$ a. e.
\end{definition}
\begin{lemma}\cite{10}\label{l1}
Let $\Lambda \in B(H,K)$ has a closed range, then there exists a unique operator $\Lambda^{\dagger}\in B(K,H)$, called the pseudo-inverse of $\Lambda$, satisfying : 
\begin{align*}
&\Lambda\Lambda^{\dagger}\Lambda=\Lambda \qquad (\Lambda\Lambda^{\dagger})^{\ast}=\Lambda\Lambda^{\dagger} \\
&\Lambda^{\dagger}\Lambda\Lambda^{\dagger}=\Lambda^{\dagger} \qquad (\Lambda^{\dagger}\Lambda)^{\ast}=\Lambda^{\dagger}\Lambda \qquad (\Lambda^{\ast})^{\dagger}=(\Lambda^{\dagger})^{\ast}\\
&\mathcal{N}(\Lambda^{\dagger})=(\mathcal{R}(\Lambda))^{\perp} \qquad \mathcal{R}(\Lambda^{\dagger})=(\mathcal{N}(\Lambda))^{\perp}
\end{align*}
\end{lemma}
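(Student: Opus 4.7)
The plan is to build $\Lambda^{\dagger}$ directly from an orthogonal decomposition of $H$ and $K$, then check the four Moore–Penrose identities and the auxiliary statements.

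First I would exploit the closed-range hypothesis. Since $\mathcal{R}(\Lambda)$ is closed, one gets the orthogonal decompositions $K=\mathcal{R}(\Lambda)\oplus\mathcal{R}(\Lambda)^{\perp}$ and $H=\mathcal{N}(\Lambda)\oplus\mathcal{N}(\Lambda)^{\perp}$, and the restriction $\Lambda_{0}:=\Lambda|_{\mathcal{N}(\Lambda)^{\perp}}\colon \mathcal{N}(\Lambda)^{\perp}\longrightarrow\mathcal{R}(\Lambda)$ is an injective bounded linear operator with closed range equal to $\mathcal{R}(\Lambda)$. Hence $\Lambda_{0}$ is a bijection between two Hilbert spaces, and the bounded inverse theorem gives $\Lambda_{0}^{-1}\in B(\mathcal{R}(\Lambda),\mathcal{N}(\Lambda)^{\perp})$.

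Next I would define $\Lambda^{\dagger}\colon K\to H$ by
\[
\Lambda^{\dagger}(y_{1}+y_{2})=\Lambda_{0}^{-1}y_{1}, \qquad y_{1}\in\mathcal{R}(\Lambda),\; y_{2}\in\mathcal{R}(\Lambda)^{\perp}.
\]
Boundedness is immediate from $\|\Lambda^{\dagger}y\|\le \|\Lambda_{0}^{-1}\|\|y\|$. The four Moore–Penrose identities then reduce to routine book-keeping with this decomposition: for $x=x_{1}+x_{2}\in \mathcal{N}(\Lambda)^{\perp}\oplus\mathcal{N}(\Lambda)$ one checks $\Lambda^{\dagger}\Lambda x=x_{1}$, hence $\Lambda\Lambda^{\dagger}\Lambda x=\Lambda x_{1}=\Lambda x$ and $\Lambda^{\dagger}\Lambda$ is the orthogonal projection onto $\mathcal{N}(\Lambda)^{\perp}$, so it is self-adjoint; symmetrically $\Lambda\Lambda^{\dagger}$ is the orthogonal projection onto $\mathcal{R}(\Lambda)$. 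Uniqueness follows from the standard algebraic argument: if $T_{1},T_{2}$ both satisfy the four identities, then $T_{1}=T_{1}\Lambda T_{1}=(T_{1}\Lambda)^{\ast}T_{1}=\Lambda^{\ast}T_{1}^{\ast}T_{1}$, and iterating with $T_{2}$ in the appropriate slots collapses $T_{1}$ to $T_{2}$.

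For the remaining statements, $\mathcal{R}(\Lambda^{\dagger})=\mathcal{N}(\Lambda)^{\perp}$ and $\mathcal{N}(\Lambda^{\dagger})=\mathcal{R}(\Lambda)^{\perp}$ are read off the definition; and since $\Lambda^{\ast}$ also has closed range with $\mathcal{R}(\Lambda^{\ast})=\mathcal{N}(\Lambda)^{\perp}$ and $\mathcal{N}(\Lambda^{\ast})=\mathcal{R}(\Lambda)^{\perp}$, the operator $(\Lambda^{\dagger})^{\ast}$ satisfies the four Moore–Penrose identities with respect to $\Lambda^{\ast}$ (just take adjoints of the four identities for $\Lambda$), so the uniqueness clause forces $(\Lambda^{\ast})^{\dagger}=(\Lambda^{\dagger})^{\ast}$.

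The only non-cosmetic obstacle is confirming boundedness of $\Lambda_{0}^{-1}$; everything afterwards is a mechanical verification. I would therefore devote the most care to setting up $\Lambda_{0}$ as a bijection of Hilbert spaces so that the bounded inverse (open mapping) theorem applies cleanly, since this is the one spot where the closed-range hypothesis is essential.
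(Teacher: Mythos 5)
The paper gives no proof of this lemma at all --- it is quoted verbatim from Christensen's book \cite{10} --- so there is nothing internal to compare your argument against. Your construction of $\Lambda^{\dagger}$ via the bijection $\Lambda_{0}\colon \mathcal{N}(\Lambda)^{\perp}\to\mathcal{R}(\Lambda)$ and the bounded inverse theorem, followed by the observation that $\Lambda^{\dagger}\Lambda$ and $\Lambda\Lambda^{\dagger}$ are the orthogonal projections onto $\mathcal{N}(\Lambda)^{\perp}$ and $\mathcal{R}(\Lambda)$, the standard algebraic uniqueness computation, and the deduction of $(\Lambda^{\ast})^{\dagger}=(\Lambda^{\dagger})^{\ast}$ from uniqueness, is exactly the classical textbook proof (essentially the one in the cited reference) and is correct.
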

\begin{lemma}\cite{11}\label{l2}
	Let $H, H_{1}$ and $H_{2}$ be three Hilbert Spaces, also let $S\in B(H_{1},H)$ and $T\in B(H_{2},H)$. The following statements are equivalents :
	\begin{itemize}
		\item [1 -] $\mathcal{R}(S) \subset \mathcal{R}(T)$.
		\item [2 -] There exist $\lambda > 0 $ such that $SS^{\ast} \leq \lambda TT^{\ast}$.
		\item [3 -] There exist $\theta \in B(H_{1},H_{2})$ such that $S=T\theta$.
	\end{itemize}
	Moreover, if $(1)$, $(2)$ and $(3)$ are valide, then there exist a unique operator $\theta$ such that :
	\begin{itemize}
\item	[a -] $\|\theta\|^{2}=inf\{\mu : SS^{\ast}\leq \mu TT^{\ast}\}$.
\item	[b -] $\mathcal{N}(S)=\mathcal{N}(\theta)$.
\item	[c -] $\mathcal{R}(\theta)\subset \overline{\mathcal{R}(T^{\ast})}$.
	\end{itemize}
\end{lemma}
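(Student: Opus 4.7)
The statement is Douglas' classical range-inclusion/factorization theorem, and I would prove the three-way equivalence by the loop $(3)\Rightarrow(2)\Rightarrow(3)$ together with $(3)\Rightarrow(1)\Rightarrow(3)$, with the middle condition always supplying the operator $\theta$. The implications $(3)\Rightarrow(1)$ and $(3)\Rightarrow(2)$ are immediate: if $S=T\theta$ then $\mathcal{R}(S)\subset\mathcal{R}(T)$, and $SS^{\ast}=T\theta\theta^{\ast}T^{\ast}\leq\|\theta\|^{2}TT^{\ast}$.

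For $(2)\Rightarrow(3)$, I would exploit the inequality in its adjoint form: $SS^{\ast}\leq\lambda TT^{\ast}$ is equivalent to $\|S^{\ast}h\|\leq\sqrt{\lambda}\,\|T^{\ast}h\|$ for all $h\in H$. This makes the correspondence $T^{\ast}h\mapsto S^{\ast}h$ well-defined on $\mathcal{R}(T^{\ast})$ (if $T^{\ast}h=T^{\ast}h'$ then $\|S^{\ast}(h-h')\|\leq\sqrt{\lambda}\|T^{\ast}(h-h')\|=0$) and bounded with norm at most $\sqrt{\lambda}$. Extend by continuity to $\overline{\mathcal{R}(T^{\ast})}$ and by zero on $\mathcal{N}(T)=\overline{\mathcal{R}(T^{\ast})}^{\perp}$, obtaining $\psi\in B(H_{2},H_{1})$ with $\psi T^{\ast}=S^{\ast}$. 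Setting $\theta:=\psi^{\ast}$ yields $S=T\theta$ and moreover $\mathcal{R}(\theta)\subset\overline{\mathcal{R}(T^{\ast})}$ by construction.

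For $(1)\Rightarrow(3)$ I would argue directly: for each $x\in H_{1}$, $Sx\in\mathcal{R}(T)$, so by Lemma~\ref{l1} (or by orthogonal decomposition) there is a unique $y_{x}\in\mathcal{N}(T)^{\perp}=\overline{\mathcal{R}(T^{\ast})}$ with $Ty_{x}=Sx$. Define $\theta x:=y_{x}$. Linearity follows from uniqueness, and boundedness follows from the closed graph theorem: if $x_{n}\to x$ and $\theta x_{n}\to z$, then $Sx_{n}=T\theta x_{n}\to Tz$, and since $Sx_{n}\to Sx$ one gets $Tz=Sx$; combined with $z\in\overline{\mathcal{R}(T^{\ast})}$ the uniqueness forces $z=\theta x$. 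So $\theta\in B(H_{1},H_{2})$ and $S=T\theta$.

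Finally, for the additional assertions (a)--(c) and uniqueness of $\theta$: (c) is built into the construction; (b) follows because $\theta x=0$ clearly implies $Sx=0$, while $Sx=0$ gives $T\theta x=0$, so $\theta x\in\mathcal{N}(T)\cap\overline{\mathcal{R}(T^{\ast})}=\{0\}$; (a) follows by combining $(3)\Rightarrow(2)$ (giving $\|\theta\|^{2}\geq\inf\{\mu:SS^{\ast}\leq\mu TT^{\ast}\}$) with the estimate $\|\theta\|=\|\psi\|\leq\sqrt{\lambda}$ from the $(2)\Rightarrow(3)$ construction (for each admissible $\mu$). For uniqueness, if two candidates $\theta_{1},\theta_{2}$ both satisfy $S=T\theta_{i}$ with $\mathcal{R}(\theta_{i})\subset\overline{\mathcal{R}(T^{\ast})}$, then $T(\theta_{1}-\theta_{2})=0$ forces $\mathcal{R}(\theta_{1}-\theta_{2})\subset\mathcal{N}(T)\cap\overline{\mathcal{R}(T^{\ast})}=\{0\}$. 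The main delicate point is $(2)\Rightarrow(3)$, where one has to verify well-definedness on $\mathcal{R}(T^{\ast})$, extend carefully by continuity, and then pass to the adjoint; this is where the norm bound and property (c) are both secured simultaneously.
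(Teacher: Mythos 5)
The paper offers no proof of Lemma \ref{l2} at all: it is quoted verbatim from Douglas \cite{11} as a known result, so there is nothing internal to compare your argument against. That said, your proposal is a correct and essentially complete proof, and it follows the classical route of Douglas's original paper: the trivial implications $(3)\Rightarrow(1)$ and $(3)\Rightarrow(2)$; the implication $(2)\Rightarrow(3)$ via the densely defined contraction $T^{\ast}h\mapsto S^{\ast}h$ on $\mathcal{R}(T^{\ast})$, extended by continuity and by zero on $\mathcal{N}(T)$, followed by passage to the adjoint; and $(1)\Rightarrow(3)$ via the coset representative in $\mathcal{N}(T)^{\perp}$ together with the closed graph theorem. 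The two delicate points --- well-definedness of $T^{\ast}h\mapsto S^{\ast}h$ when $T^{\ast}$ is not injective, and the identification $\mathcal{N}(T)\cap\overline{\mathcal{R}(T^{\ast})}=\{0\}$ that drives both uniqueness and property (b) --- are both handled explicitly. The only step I would ask you to spell out slightly more is (a): to conclude $\|\theta\|^{2}\leq\inf\{\mu: SS^{\ast}\leq\mu TT^{\ast}\}$ you need to observe that for every admissible $\mu$ the operator produced by the $(2)\Rightarrow(3)$ construction is the \emph{same} canonical $\theta$ (by the uniqueness you proved), so that the bound $\|\theta\|\leq\sqrt{\mu}$ holds for all admissible $\mu$ simultaneously; you gesture at this but it deserves one sentence.
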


\section{Main result}
Before giving our main results, we will first demonstrate the following lemmas.
\begin{lemma}\label{l3}
	Let $K\in B(H)$ and $F$ be a continuous Bessel sequence of $H$ with analysis operator $T$. Then $F$ is a continuous $K$-frame of $H$ if and only if:
	
	 $\mathcal{R}(K) \subset \mathcal{R}(T^{\ast})$.
\end{lemma}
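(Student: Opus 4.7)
The plan is to reformulate the continuous $K$-frame condition in operator-theoretic language and then invoke the range-inclusion characterization in Lemma \ref{l2} (the Douglas factorization lemma).

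First I would observe that since $F$ is assumed to be a continuous Bessel sequence, the analysis operator $T$ is bounded and the upper frame inequality $\int_{\Omega}|\langle f,F(\omega)\rangle|^{2}d\mu(\omega)\leq B\|f\|^{2}$ holds automatically. Consequently, $F$ being a continuous $K$-frame is equivalent to the existence of a constant $A>0$ with
\begin{equation*}
A\|K^{\ast}f\|^{2}\leq \int_{\Omega}|\langle f,F(\omega)\rangle|^{2}d\mu(\omega)=\|Tf\|^{2},\qquad \forall f\in H.
\end{equation*}

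Next I would rewrite each side as a quadratic form: $\|K^{\ast}f\|^{2}=\langle KK^{\ast}f,f\rangle$ and $\|Tf\|^{2}=\langle T^{\ast}Tf,f\rangle=\langle T^{\ast}(T^{\ast})^{\ast}f,f\rangle$. Thus the lower frame bound is equivalent to the operator inequality $AKK^{\ast}\leq T^{\ast}(T^{\ast})^{\ast}$, i.e., to the existence of some $\lambda=1/A>0$ with $KK^{\ast}\leq \lambda T^{\ast}(T^{\ast})^{\ast}$.

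Finally I would apply Lemma \ref{l2}, taking the operator denoted there by $S$ to be our $K$ and the operator denoted there by $T$ to be our synthesis operator $T^{\ast}$. The equivalence (1)$\Leftrightarrow$(2) in that lemma yields exactly $\mathcal{R}(K)\subset \mathcal{R}(T^{\ast})$ $\Longleftrightarrow$ $\exists\lambda>0:\ KK^{\ast}\leq \lambda T^{\ast}(T^{\ast})^{\ast}$, which by the previous step is the desired equivalence.

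The only real obstacle is bookkeeping: since the symbol $T$ is overloaded (the analysis operator here vs.\ the generic operator in Lemma \ref{l2}), I would be careful to write out the substitution explicitly so that the reader sees $(T^{\ast})^{\ast}=T$ being used at the right place and does not confuse the two roles. No new estimates or constructions beyond the quadratic-form rewriting and Lemma \ref{l2} are needed.
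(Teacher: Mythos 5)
Your argument is correct and is exactly the route the paper intends: the paper's proof consists of the single line ``It is an immediate consequence of Lemma \ref{l2}'', and your quadratic-form rewriting $AKK^{\ast}\leq T^{\ast}T$ followed by the substitution $S\mapsto K$, $T\mapsto T^{\ast}$ in Douglas' lemma is precisely the omitted bookkeeping. Nothing further is needed.
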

\begin{proof}
	It is an immediate consequence of lemma \ref{l2}.
\end{proof}
\begin{lemma}\label{l4}
Suppose that $K\in B(H)$ has closed range and $F$ is a parseval continuous $K$-frame of $H$, then $K^{\dagger}F$ is a dual continuous $K$-Bessel sequence of $F$.
\end{lemma}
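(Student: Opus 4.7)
The plan is to work with the analysis operator $T$ of $F$, translating the Parseval hypothesis into the operator identity $T^{\ast}T = KK^{\ast}$, and then to verify the two requirements for a dual continuous $K$-Bessel sequence of $F$ in turn: first that $K^{\dagger}F$ is itself a continuous Bessel sequence, and second that it produces the reconstruction $Kf=\int_{\Omega}\langle f,K^{\dagger}F(\omega)\rangle F(\omega)\,d\mu(\omega)$ for every $f\in H$. Since $K$ is assumed to have closed range, Lemma~\ref{l1} guarantees that $K^{\dagger}\in B(H)$, so $K^{\dagger}F$ is at least a well-defined weakly measurable mapping $\Omega\to H$.

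For the Bessel estimate I would use the adjoint trick $\langle f,K^{\dagger}F(\omega)\rangle=\langle(K^{\dagger})^{\ast}f,F(\omega)\rangle$ and then invoke the upper continuous $K$-frame bound for $F$ applied to the vector $(K^{\dagger})^{\ast}f$. This gives $\int_{\Omega}|\langle f,K^{\dagger}F(\omega)\rangle|^{2}\,d\mu(\omega)\le B\|(K^{\dagger})^{\ast}f\|^{2}\le B\|K^{\dagger}\|^{2}\|f\|^{2}$, so $K^{\dagger}F$ is Bessel with bound $B\|K^{\dagger}\|^{2}$ (here $B=1$ by the Parseval assumption).

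The central computation is the reconstruction identity. Working weakly, for $f,g\in H$ I would rewrite
\begin{equation*}
\int_{\Omega}\langle f,K^{\dagger}F(\omega)\rangle\langle F(\omega),g\rangle\,d\mu(\omega)=\langle T(K^{\dagger})^{\ast}f,Tg\rangle=\langle T^{\ast}T(K^{\dagger})^{\ast}f,g\rangle.
\end{equation*}
The Parseval hypothesis gives $T^{\ast}T=KK^{\ast}$, so this equals $\langle KK^{\ast}(K^{\dagger})^{\ast}f,g\rangle=\langle K(K^{\dagger}K)^{\ast}f,g\rangle$. Now the self-adjointness $(K^{\dagger}K)^{\ast}=K^{\dagger}K$ from Lemma~\ref{l1} together with the identity $KK^{\dagger}K=K$ collapses this to $\langle Kf,g\rangle$. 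Since $g$ is arbitrary, the weak identity upgrades to $Kf=\int_{\Omega}\langle f,K^{\dagger}F(\omega)\rangle F(\omega)\,d\mu(\omega)$, which is precisely the dual relation of $K^{\dagger}F$ with $F$ in the $K$-frame sense.

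The main obstacle is the bookkeeping with the pseudo-inverse, specifically ensuring that the two characteristic identities from Lemma~\ref{l1}, namely $(K^{\dagger}K)^{\ast}=K^{\dagger}K$ and $KK^{\dagger}K=K$, are applied in the right order so that the stray factor $(K^{\dagger}K)^{\ast}$ on the right of $K$ can be absorbed; once that algebraic reduction is in place, the Parseval assumption is doing all the analytic work and no further estimates are required.
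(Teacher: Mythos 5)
Your proof is correct, and it reaches the reconstruction identity by a cleaner route than the paper. The paper first proves the Bessel property only for $g\in\mathcal{R}(K^{\ast})$ via the substitution $f=(K^{\ast})^{\dagger}g$ in the Parseval identity, and then establishes $Kf=\int_{\Omega}\langle f,K^{\dagger}F(\omega)\rangle F(\omega)\,d\mu(\omega)$ by decomposing $H=\mathcal{R}(K^{\ast})\oplus\mathcal{N}(K)$ and checking the formula separately on each summand (on $\mathcal{R}(K^{\ast})$ by writing $g=K^{\ast}(K^{\dagger})^{\ast}g$, on $\mathcal{N}(K)=\mathcal{N}((K^{\dagger})^{\ast})$ by noting both sides vanish). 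You avoid the case split entirely: the single algebraic identity $KK^{\ast}(K^{\dagger})^{\ast}=K(K^{\dagger}K)^{\ast}=KK^{\dagger}K=K$, valid on all of $H$, absorbs both cases at once, and your Bessel estimate $\int_{\Omega}|\langle f,K^{\dagger}F(\omega)\rangle|^{2}\,d\mu(\omega)\le\|K^{\dagger}\|^{2}\|f\|^{2}$ holds for every $f\in H$ directly, whereas the paper's version of that step is stated only on $\mathcal{R}(K^{\ast})$ and tacitly relies on the integrand vanishing on the complement. Both arguments rest on the same two ingredients --- the operator form $T^{\ast}T=KK^{\ast}$ of the Parseval hypothesis and the pseudo-inverse identities of Lemma \ref{l1} --- so the gain of your version is economy and the removal of the implicit gap in the Bessel step, not a new idea.
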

\begin{proof}
$F$ is a parseval continuous $K$-frame of $H$, then :
\begin{equation}\label{1}
\|K^{\ast}f\|^{2}=\int_{\Omega}|\langle f,F(\omega)\rangle |^{2}d\mu(\omega) \qquad \forall f\in H.
\end{equation}
Let $g\in {\mathcal{R}(K^{\ast})}$, we have : $g=K^{\ast}(K^{\ast})^{\dagger}g=K^{\ast}(K^{\dagger})^{\ast}g$.  Replace $f$ by $(K^{\ast})^{\dagger}g$ in \eqref{1}, then :
\begin{equation*}
\|K^{\ast}(K^{\ast})^{\dagger}g\|^{2}=\int_{\Omega}|\langle (K^{\ast})^{\dagger}g, F(\omega)\rangle |^{2}d\mu(\omega)
\end{equation*}
so, 
\begin{equation*}
\|g\|^{2}=\int_{\Omega}|\langle g,K^{\dagger}F(\omega)\rangle |^{2}d\mu(\omega).
\end{equation*}
Hence, $K^{\dagger}F$ is a continuous Bessel sequence.

Since $F$ is a parseval continuous $K$-frame, one has
\begin{equation*}
KK^{\ast}f=\int_{\Omega}\langle f,F(\omega)\rangle F(\omega)d\mu(\omega).
\end{equation*}
Then we have : $g=K^{\ast}(K^{\ast})^{\dagger}g=K^{\ast}(K^{\dagger})^{\ast}g$,
\begin{align*}
Kg&=KK^{\ast}(K^{\dagger})^{\ast}g\\
&=\int_{\Omega}\langle (K^{\dagger})^{\ast}g,F(\omega)\rangle F(\omega)d\mu(\omega)\\
&=\int_{\Omega}\langle g,K^{\dagger}F(\omega)\rangle F(\omega)d\mu(\omega).
\end{align*}
If $h\in (\mathcal{R}(K^{\ast}))^{\perp}=\mathcal{N}(K)$\\
lemma \ref{l1} $\Longrightarrow h\in (\mathcal{N}((K^{\ast})^{\dagger})=\mathcal{N}((K^{\dagger})^{\ast})$,\\
then :
\begin{equation*}
\int_{\Omega}\langle h,K^{\dagger}F(\omega)\rangle F(\omega)d\mu(\omega)=\int_{\Omega}\langle (K^{\dagger})^{\ast}h,F(\omega)\rangle F(\omega)d\mu(\omega)=0=Kh.
\end{equation*}
So, for all $f\in H$, we have :
\begin{equation*}
Kf=\int_{\Omega}\langle g,K^{\dagger}F(\omega)\rangle F(\omega)d\mu(\omega).
\end{equation*}
\end{proof}
\begin{lemma}\label{l5}
Suppose that $K\in B(H)$ has closed range and $F$ is a parseval continuous $K$-frame of $H$ with analysis operator $T$, then $G$ is a dual continuous $K$-Bessel sequence of $F$ if and only if there exists $\varphi \in B(H,L^{2}(\Omega))$ such that : $T^{\ast}\varphi = 0$ and $(\varphi f)_{\omega}=\langle f, G(\omega)-K^{\dagger}F(\omega)\rangle \quad \forall f\in H, \quad \forall \omega \in \Omega$.
\end{lemma}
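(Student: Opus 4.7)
The plan is to treat this as a transparent restatement that the difference between any dual continuous $K$-Bessel sequence and the canonical dual $K^{\dagger}F$ (guaranteed by Lemma \ref{l4}) is precisely the degree of freedom allowed. I would package the data in the language of analysis operators: write $T_{G}$ and $T_{K^{\dagger}F}$ for the analysis operators of $G$ and $K^{\dagger}F$. The statement then says $G$ is a dual continuous $K$-Bessel sequence of $F$ iff $\varphi := T_{G}-T_{K^{\dagger}F}$ is a bounded operator $H \to L^{2}(\Omega)$ with $T^{\ast}\varphi=0$.

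For the forward direction, I would assume that $G$ is a dual continuous $K$-Bessel sequence of $F$. By definition, $T_{G}\in B(H,L^{2}(\Omega))$, and Lemma \ref{l4} provides $T_{K^{\dagger}F}\in B(H,L^{2}(\Omega))$. Defining $\varphi f$ pointwise by $(\varphi f)_{\omega}=\langle f,G(\omega)-K^{\dagger}F(\omega)\rangle$ gives $\varphi=T_{G}-T_{K^{\dagger}F}\in B(H,L^{2}(\Omega))$. For every $f\in H$,
\begin{align*}
T^{\ast}\varphi f
&=\int_{\Omega}(\varphi f)_{\omega}F(\omega)\,d\mu(\omega)\\
&=\int_{\Omega}\langle f,G(\omega)\rangle F(\omega)\,d\mu(\omega)-\int_{\Omega}\langle f,K^{\dagger}F(\omega)\rangle F(\omega)\,d\mu(\omega)=Kf-Kf=0,
\end{align*}
where the first integral is $Kf$ because $G$ is a dual continuous $K$-Bessel sequence and the second is $Kf$ by Lemma \ref{l4}.

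For the converse, suppose such a $\varphi$ exists. Since $\varphi\in B(H,L^{2}(\Omega))$ and $T_{K^{\dagger}F}\in B(H,L^{2}(\Omega))$ (Lemma \ref{l4}), the map $f\mapsto\{\langle f,G(\omega)\rangle\}_{\omega}=\varphi f+T_{K^{\dagger}F}f$ is a bounded operator $H\to L^{2}(\Omega)$, so $G$ is a continuous Bessel sequence with analysis operator $T_{G}=\varphi+T_{K^{\dagger}F}$. Then $T^{\ast}T_{G}f=T^{\ast}\varphi f+T^{\ast}T_{K^{\dagger}F}f=0+Kf=Kf$, which is exactly the defining identity $\int_{\Omega}\langle f,G(\omega)\rangle F(\omega)\,d\mu(\omega)=Kf$ of a dual continuous $K$-Bessel sequence.

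There is not much of a genuine obstacle here; the only point that requires care is the identification of $\varphi$ as the difference of two already-bounded analysis operators (so that the measurability/Bessel hypothesis built into "$\varphi\in B(H,L^{2}(\Omega))$" is used symmetrically in both directions), together with a clean invocation of Lemma \ref{l4} to supply the canonical dual and its interaction with $T^{\ast}$.
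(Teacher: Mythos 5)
Your proof is correct and follows essentially the same route as the paper: define $\varphi$ as the (pointwise) difference between the analysis operators of $G$ and of the canonical dual $K^{\dagger}F$ from Lemma \ref{l4}, verify $T^{\ast}\varphi=0$ from the two reconstruction identities, and reverse the computation for the converse. Your version is in fact slightly more careful than the paper's, since you explicitly justify boundedness of $\varphi$ and the Bessel property of $G$ in the converse, which the paper leaves implicit.
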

\begin{proof}
Let $G$ be a dual continuous $K$-Bessel sequence of $F$, \begin{align*}
\varphi : &H\longrightarrow L^{2}(\Omega)\\
&f\longrightarrow \varphi f
\end{align*}
wich is defined by: $(\varphi f)_{\omega}=\langle f,G(\omega)-K^{\dagger}F(\omega)\rangle$.
One has
\begin{align*}
T^{\ast}(\varphi f)
&=\int_{\Omega}\langle f,G(\omega)-K^{\dagger}F(\omega)\rangle F(\omega) d\mu(\omega)\\
&=\int_{\Omega}\langle f,G(\omega)\rangle F(\omega) d\mu(\omega) - \int_{\Omega}\langle f,K^{\dagger}F(\omega)\rangle F(\omega) d\mu(\omega)\\
&=Kf-Kf=0.
\end{align*}
Conversely, suppose that exist $\varphi \in B(H,L^{2}(\Omega))$ such that : $T^{\ast}\varphi = 0$ and 
\begin{equation*}	
(\varphi f)_{\omega}=\langle f,G(\omega)-K^{\dagger}F(\omega)\rangle \quad \forall f\in H, \quad \forall \omega \in \Omega.
\end{equation*}
We have :
\begin{align*}
\int_{\Omega}\langle f,G(\omega)\rangle F(\omega)d\mu (\omega)&=\int_{\Omega}\langle f, K^{\dagger}F(\omega)\rangle F(\omega)d\mu(\omega)\\
&=Kf.
\end{align*}
\end{proof}
\begin{theorem}\label{l6}
Suppose that $K\in B(H)$ has closed range and $F$ is a parseval continuous $K$-frame of $H$ with analysis operator $T_{F}$, then $K^{\dagger}F$ is the canonical dual continuous $K$-Bessel sequence of $F$.
\end{theorem}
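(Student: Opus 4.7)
The plan is to establish canonicity in the minimum-norm sense: for every $f\in H$, the coefficient family $\{\langle f,K^{\dagger}F(\omega)\rangle\}_{\omega\in\Omega}$ has smaller $L^{2}(\Omega)$-norm than the corresponding family produced by any other dual continuous $K$-Bessel sequence of $F$, with equality only for $G=K^{\dagger}F$ itself. Existence of the dual is already handled by Lemma \ref{l4}, and Lemma \ref{l5} parametrises all other duals, so what remains is to extract a suitable orthogonal decomposition in $L^{2}(\Omega)$.

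First, I would fix an arbitrary dual continuous $K$-Bessel sequence $G$ of $F$ and apply Lemma \ref{l5} to obtain $\varphi_{G}\in B(H,L^{2}(\Omega))$ with $T_{F}^{\ast}\varphi_{G}=0$ and $(\varphi_{G}f)(\omega)=\langle f,G(\omega)-K^{\dagger}F(\omega)\rangle$. Writing $T_{G}$ and $T_{K^{\dagger}F}$ for the analysis operators of $G$ and $K^{\dagger}F$, this reads $T_{G}=T_{K^{\dagger}F}+\varphi_{G}$, and the constraint $T_{F}^{\ast}\varphi_{G}=0$ places $\varphi_{G}f$ inside $\mathcal{N}(T_{F}^{\ast})=\mathcal{R}(T_{F})^{\perp}$ for every $f\in H$.

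Next, using $(K^{\dagger})^{\ast}=(K^{\ast})^{\dagger}$ from Lemma \ref{l1} (valid because $K$ has closed range), a one-line computation gives $(T_{K^{\dagger}F}f)(\omega)=\langle (K^{\ast})^{\dagger}f,F(\omega)\rangle=(T_{F}(K^{\ast})^{\dagger}f)(\omega)$, so $T_{K^{\dagger}F}f\in\mathcal{R}(T_{F})$ for every $f\in H$. Combining with the previous paragraph, Pythagoras in $L^{2}(\Omega)$ yields
\[
\|T_{G}f\|^{2}=\|T_{K^{\dagger}F}f\|^{2}+\|\varphi_{G}f\|^{2}\geq\|T_{K^{\dagger}F}f\|^{2},
\]
with equality if and only if $\varphi_{G}=0$, i.e.\ $G=K^{\dagger}F$. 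This is precisely the canonical property.

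The main obstacle I anticipate is not any single calculation but rather pinning down the intended meaning of ``canonical'' in this $K$-frame setting: once one adopts the usual minimum-norm coefficient interpretation inherited from classical frame theory, Lemmas \ref{l1}, \ref{l4} and \ref{l5} already carry the entire argument, and what is left is merely exploiting the orthogonal decomposition $L^{2}(\Omega)=\mathcal{R}(T_{F})\oplus\mathcal{R}(T_{F})^{\perp}$ to split $T_{G}f$ into its $K^{\dagger}F$-part and a correction lying in $\mathcal{R}(T_{F})^{\perp}$.
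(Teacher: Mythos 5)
Your proposal is correct and follows essentially the same route as the paper: both invoke Lemma \ref{l4} for existence, use Lemma \ref{l5} to write $T_{G}=T_{K^{\dagger}F}+\varphi$ with $T_{F}^{\ast}\varphi=0$, and conclude $\|T_{G}f\|^{2}=\|T_{K^{\dagger}F}f\|^{2}+\|\varphi f\|^{2}$ by orthogonality. In fact your version is slightly more complete, since you justify why the cross terms vanish (namely $T_{K^{\dagger}F}f=T_{F}(K^{\ast})^{\dagger}f\in\mathcal{R}(T_{F})$ while $\varphi f\in\mathcal{N}(T_{F}^{\ast})=\mathcal{R}(T_{F})^{\perp}$), a step the paper's proof asserts without explanation.
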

\begin{proof}
From lemma \ref{l4}, we know that $K^{\dagger}F$ is a dual continuous $K$-Bessel sequence of $F$. To complete the proof, it only needs to prove: $\|T_{F^{'}}\|\leq \|T_{G}\|$ for any dual continuous $K$-Bessel sequence $G$ of $F$, where $T_{F^{'}}$ is the analysis operator of $K^{\dagger}F$.\\
By lemma \ref{l5}, there exist $\varphi \in B(H,L^{2}(\Omega))$ such that $T^{\ast}_{F}\varphi= 0$ and 
\begin{equation*}	
(\varphi f)_{\omega}=\langle f,G(\omega)-K^{\dagger}F(\omega)\rangle \quad \forall f\in H, \quad \forall \omega \in \Omega.
\end{equation*}
On the other hand : $T_{G}^{\ast}=T_{F^{'}}^{\ast} + \varphi$,
\begin{align*}
\|T_{G}^{\ast}f\|^{2}&=\langle T_{G}^{\ast}f,T_{G}^{\ast}f\rangle\\
&=\langle T_{F^{'}}^{\ast}f + \varphi f,T_{F^{'}}^{\ast}f + \varphi f\rangle \\
&=\|T_{F^{'}}^{\ast}f\|^{2} + \langle T_{F^{'}}^{\ast}f,\varphi f\rangle + \langle \varphi f, T_{F^{'}}^{\ast}f \rangle + \|\varphi f\|^{2}\\
&=\|T_{F^{'}}^{\ast}f\|^{2}  +  \|\varphi f\|^{2}\geq \|T_{F^{'}}^{\ast}f\|^{2}
\end{align*}
Hence, $\|T_{F^{'}}\|\leq \|T_{G}\|$ as desired.
\end{proof}
\begin{lemma}
\begin{itemize}
\item [1 -] The canonical continuous dual $K$-Bessel sequence of a parseval continuous $K$-frame $F$, wich will be denoted by $\tilde{F}$ later, is actually a parseval continuous frame on $(\mathcal{N}(K))^{\perp}$.
\item [2 -] The canonical dual continuous $K$-Bessel sequence of parseval continuous $K$-frame $F$ is precisely a parseval continuous $K^{\dagger}K$-frame. But in general it is not a parseval continuous $K$-frame. It can naturally generate a new one in the form $K\tilde{F}$.


\end{itemize}
\end{lemma}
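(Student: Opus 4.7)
The plan is to treat the two items separately, exploiting two idempotent self-adjoint operators furnished by Lemma \ref{l1}: $P_1 := K^{\dagger}K$, the orthogonal projection onto $\mathcal{R}(K^{\dagger}) = (\mathcal{N}(K))^{\perp}$, and $P_2 := KK^{\dagger}$, the orthogonal projection onto $\mathcal{R}(K)$. Throughout I will use the canonical pseudo-inverse identities $KK^{\dagger}K = K$, $(KK^{\dagger})^{\ast} = KK^{\dagger}$, and $(K^{\dagger}K)^{\ast} = K^{\dagger}K$ from Lemma \ref{l1}.

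For item (1), the proof of Lemma \ref{l4} already produced the identity $\|g\|^{2} = \int_{\Omega} |\langle g, K^{\dagger}F(\omega)\rangle|^{2} d\mu(\omega)$ for every $g \in \mathcal{R}(K^{\ast}) = (\mathcal{N}(K))^{\perp}$. Since $K^{\dagger}F(\omega) \in \mathcal{R}(K^{\dagger}) = (\mathcal{N}(K))^{\perp}$ for every $\omega$, this is exactly the Parseval frame condition for $\tilde F = K^{\dagger}F$ viewed as a map into the closed subspace $(\mathcal{N}(K))^{\perp}$, which is itself a Hilbert space. Nothing more is needed.

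For item (2), fix $f \in H$ and split $f = P_{1}f + (I-P_{1})f$, noting $(I-P_{1})f \in \mathcal{N}(K)$. Because $K^{\dagger}F(\omega) \in (\mathcal{N}(K))^{\perp}$, the inner product $\langle (I-P_{1})f, K^{\dagger}F(\omega)\rangle$ vanishes pointwise, hence
\begin{align*}
\int_{\Omega} |\langle f, K^{\dagger}F(\omega)\rangle|^{2} d\mu(\omega)
&= \int_{\Omega} |\langle P_{1}f, K^{\dagger}F(\omega)\rangle|^{2} d\mu(\omega) \\
&= \|P_{1}f\|^{2} = \|(K^{\dagger}K)^{\ast}f\|^{2},
\end{align*}
where the middle equality is item (1) applied to $g = P_{1}f$. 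This is precisely the Parseval $K^{\dagger}K$-frame identity. For the negative side-remark I would simply observe that $K^{\dagger}K$ is a projection while $K$ generically is not, so equality $\|K^{\ast}f\| = \|(K^{\dagger}K)^{\ast}f\|$ fails unless $K$ is a partial isometry on $(\mathcal{N}(K))^{\perp}$; any self-adjoint $K$ with a non-zero singular value different from $1$ provides a counterexample.

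Finally, for $K\tilde F = KK^{\dagger}F$, I use $KK^{\dagger}K = K$; taking adjoints and using self-adjointness of $KK^{\dagger}$ yields $K^{\ast}KK^{\dagger} = K^{\ast}$. Applying the Parseval $K$-frame hypothesis on $F$ to the vector $KK^{\dagger}f$,
\begin{align*}
\int_{\Omega} |\langle f, KK^{\dagger}F(\omega)\rangle|^{2} d\mu(\omega)
&= \int_{\Omega} |\langle KK^{\dagger}f, F(\omega)\rangle|^{2} d\mu(\omega) \\
&= \|K^{\ast}KK^{\dagger}f\|^{2} = \|K^{\ast}f\|^{2},
\end{align*}
so $K\tilde F$ is a Parseval continuous $K$-frame. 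The only real obstacle is the bookkeeping of which pseudo-inverse identity applies at each step and the verification that $K^{\dagger}F(\omega)$ really sits in $(\mathcal{N}(K))^{\perp}$; once those identifications are fixed, each claim reduces to a one-line computation with no analytic subtlety.
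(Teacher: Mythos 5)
Your proof is correct and follows essentially the same route as the paper: every claim reduces to applying the Parseval identity of $F$ to a transformed vector and then simplifying with the pseudo-inverse algebra $KK^{\dagger}K=K$, $(K^{\dagger}K)^{\ast}=K^{\dagger}K$, $(KK^{\dagger})^{\ast}=KK^{\dagger}$ from Lemma \ref{l1}. The only (harmless) variation is that for the $K^{\dagger}K$-frame identity you decompose $f=P_{1}f+(I-P_{1})f$ and invoke item (1), where the paper computes $\int_{\Omega}|\langle f,K^{\dagger}F(\omega)\rangle|^{2}d\mu(\omega)=\|K^{\ast}(K^{\dagger})^{\ast}f\|^{2}=\|(K^{\dagger}K)^{\ast}f\|^{2}$ directly; your added counterexample remark for the negative assertion is a small bonus the paper omits.
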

\begin{proof}
\begin{itemize}
	\item [1]-\begin{equation*}
	\int_{\Omega}|\langle f,\tilde{F}(\omega)\rangle|^{2}d\mu (\omega)=\|K^{\ast}(K^{\dagger})^{\ast}f\|^{2}=\|(K^{\dagger}K)^{\ast}f\|^{2}=\|K^{\dagger}Kf\|^{2}=\|f\|^{2}.
	\end{equation*}
	\item[2 -] \begin{align*}
	\int_{\Omega}|\langle f,\tilde{F}(\omega)\rangle|^{2}d\mu (\omega)&=\int_{\Omega}|\langle f,K^{\dagger}F(\omega)\rangle|^{2}d\mu (\omega)\\
	&=\|K^{\ast}(K^{\dagger})^{\ast}f\|^{2}\\
	&=\|(K^{\dagger}K)^{\ast}f\|^{2} \qquad \forall f\in H,
	\end{align*}
	\begin{align*}
	\int_{\Omega}|\langle f,K\tilde{F}(\omega)\rangle|^{2}d\mu (\omega)&=\int_{\Omega}|\langle f,KK^{\dagger}F(\omega)\rangle|^{2}d\mu (\omega)\\
	&=\int_{\Omega}|\langle (KK^{\dagger})^{\ast}f,F(\omega)\rangle|^{2}d\mu (\omega)\\
	&=\|K^{\ast}(KK^{\dagger})^{\ast}f\|^{2}=\|(KK^{\dagger}K)^{\ast}f\|^{2}\\
	&=\|K^{\ast}f\|^{2} \qquad \forall f\in H.
	\end{align*}
	
\end{itemize}
\end{proof}
\begin{theorem}
Suppose that $K\in B(H)$ has closed range and $F$ is a parseval continuous $K$-frame of $H$ with a dual continuous $K$-Bessel sequence $G$. Then $G$ is the canonical dual continuous $K$-Bessel sequence of $F$ if and only if $T^{\ast}_{G}T_{G}=T^{\ast}_{G}T_{H}$ for any dual continuous $K$-Bessel sequence $H$ of $F$, where $T_{G}$ and $T_{H}$ denote the analysis operators of $G$ and $H$ respectively. 
\end{theorem}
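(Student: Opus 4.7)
The plan is to reduce both implications to the decomposition supplied by Lemma \ref{l5}. Writing $\tilde{F}=K^{\dagger}F$ for the canonical dual identified in Theorem \ref{l6}, with analysis operator $T_{\tilde{F}}$, the first preparatory step is the identity $\langle f,K^{\dagger}F(\omega)\rangle=\langle(K^{\dagger})^{\ast}f,F(\omega)\rangle$, which gives $T_{\tilde{F}}=T_{F}(K^{\dagger})^{\ast}$ and therefore $T_{\tilde{F}}^{\ast}=K^{\dagger}T_{F}^{\ast}$. Lemma \ref{l5} then supplies, for every dual continuous $K$-Bessel sequence $X$ of $F$, an operator $\varphi_{X}\in B(H,L^{2}(\Omega))$ with $T_{F}^{\ast}\varphi_{X}=0$ and $(\varphi_{X}f)_{\omega}=\langle f,X(\omega)-K^{\dagger}F(\omega)\rangle$; equivalently $T_{X}=T_{\tilde{F}}+\varphi_{X}$. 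Combining these observations yields the key auxiliary fact $T_{\tilde{F}}^{\ast}\varphi_{X}=K^{\dagger}(T_{F}^{\ast}\varphi_{X})=0$ for every dual $X$.

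The forward implication then collapses to a one-line computation: assuming $G=\tilde{F}$, so that $\varphi_{G}=0$ and $T_{G}=T_{\tilde{F}}$, one finds $T_{G}^{\ast}T_{G}-T_{G}^{\ast}T_{H}=T_{\tilde{F}}^{\ast}(T_{\tilde{F}}-T_{H})=-T_{\tilde{F}}^{\ast}\varphi_{H}=0$ for every dual $H$. For the converse, I would specialize the hypothesis to $H=\tilde{F}$, which is admissible by Lemma \ref{l4}, to get $T_{G}^{\ast}\varphi_{G}=T_{G}^{\ast}(T_{G}-T_{\tilde{F}})=0$. Expanding $T_{G}^{\ast}=T_{\tilde{F}}^{\ast}+\varphi_{G}^{\ast}$ and using the key auxiliary fact reduces this to $\varphi_{G}^{\ast}\varphi_{G}=0$, whence $\varphi_{G}=0$ and $T_{G}=T_{\tilde{F}}$. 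A standard density argument applied to $\langle f,G(\omega)-\tilde{F}(\omega)\rangle=0$ for $f$ ranging over a countable dense subset of $H$ then upgrades this to $G=\tilde{F}$ $\mu$-a.e.

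The main obstacle I anticipate is spotting and properly using the factorization $T_{\tilde{F}}^{\ast}=K^{\dagger}T_{F}^{\ast}$: it is precisely this identity that makes $T_{\tilde{F}}^{\ast}$ annihilate every error term $\varphi_{X}$ produced by Lemma \ref{l5}, and without it neither direction reduces to clean algebraic manipulations on operators. Everything else—the one-line forward check, the positivity deduction $\varphi_{G}^{\ast}\varphi_{G}=0\Rightarrow\varphi_{G}=0$, and the a.e.\ passage from $T_{G}=T_{\tilde{F}}$ to $G=\tilde{F}$—is routine, though the last step warrants explicit mention since the theorem compares the sequences themselves rather than only their analysis operators.
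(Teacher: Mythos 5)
Your proof is correct, and while the forward direction coincides with the paper's argument, your converse takes a genuinely different route. For the forward implication the paper computes $\langle (T_{G}-T_{H})f,T_{G}g\rangle=\langle K^{\dagger}T^{\ast}_{F}(T_{G}-T_{H})f,g\rangle=0$ using $T_{G}=T_{F}(K^{\dagger})^{\ast}$ and the fact that both duals reproduce $Kf$; this is exactly your identity $T_{\tilde{F}}^{\ast}\varphi_{H}=K^{\dagger}T_{F}^{\ast}\varphi_{H}=0$, just written without the $\varphi$ bookkeeping. The converse is where you diverge: the paper argues $\|T_{G}\|^{2}=\|T^{\ast}_{G}T_{G}\|=\|T^{\ast}_{G}T_{H}\|\leq\|T_{G}\|\,\|T_{H}\|$, hence $\|T_{G}\|\leq\|T_{H}\|$ for every dual $H$, and declares $G$ canonical on the strength of the norm-minimality property from Theorem \ref{l6}. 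That step is fragile, since Theorem \ref{l6} only shows the canonical dual \emph{attains} the minimal analysis-operator norm, not that the minimizer is unique, so norm-minimality of $T_{G}$ does not by itself force $G=\tilde{F}$. Your route---specializing the hypothesis to $H=\tilde{F}$, expanding $T^{\ast}_{G}\varphi_{G}=T^{\ast}_{\tilde{F}}\varphi_{G}+\varphi_{G}^{\ast}\varphi_{G}=\varphi_{G}^{\ast}\varphi_{G}$, and concluding $\varphi_{G}=0$, hence $T_{G}=T_{\tilde{F}}$---closes that gap and yields the identification $G=\tilde{F}$ directly. The one caveat is the point you already flag: equality of analysis operators gives $\langle f,G(\omega)-\tilde{F}(\omega)\rangle=0$ only for $\mu$-a.e.\ $\omega$ depending on $f$, so the countable-density argument is needed to conclude $G=\tilde{F}$ $\mu$-a.e.; this matches the level of identification used elsewhere in the paper (e.g.\ the uniqueness argument in Theorem \ref{t1}), so your version is both correct and strictly more informative than the printed proof.
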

\begin{proof}
Let us first assume that $G=\tilde{F}$.\\
If we denote by $T_{F}$ the analysis operator of $F$ then a direct calculation can show that $T_{G}=T_{F}(K^{\dagger})^{\ast}$.\\
From this fact and taking into account the fact that :
\begin{equation*}
T^{\ast}_{F}(T_{G}f-T_{H}f)=\int_{\Omega}\langle f,\tilde{F}(\omega)\rangle F(\omega) d\mu (\omega)-\int_{\Omega}\langle f,H(\omega)\rangle F(\omega)d\mu (\omega)=0.
\end{equation*}
We obtain for any $f,g \in H$:
\begin{equation*}
\langle (T_{G}-T_{H})f,T_{G}g\rangle = \langle (T_{G}-T_{H})f,T_{F}(K^{\dagger})^{\ast}g\rangle=\langle K^{\dagger}T^{\ast}_{F}(T_{G}-T_{H})f,g\rangle = 0.
\end{equation*}
Thus $T^{\ast}_{G}(T_{G}f-T_{H}f)=0$ then $T^{\ast}_{G}T_{G}=T^{\ast}_{G}T_{H}$.\\
For the converse, suppose that $T^{\ast}_{G}T_{G}=T^{\ast}_{G}T_{H}$, for any dual continuous $K$-Bessel sequence $H$ of $F$. Then :
\begin{equation*}
\|T_{G}\|^{2}=\|T^{\ast}_{G}T_{G}\|=\|T^{\ast}_{G}T_{H}\|\leq \|T_{G}\|\|T_{H}\|.
\end{equation*}
So, $\|T_{G}\|\leq \|T_{H}\|$ implying that $G$ is the canonical continous $K$-Bessel sequence of $F$.
\end{proof}
Now it is legitimate to pose the following question: Under what condition will a parseval continuous $K$-frame admit a unique dual continuous $K$-Bessel sequence?
\begin{theorem}\label{t1}
Suppose that $K\in B(H)$ has closed range and $F$ is a parseval continuous $K$-frame of $H$ with analysis operator $T_{F}$. Then $F$ has a unique dual continuous $K$-Bessel sequence if and only if $\mathcal{R}(T_{F})=L^{2}(\Omega)$.
\end{theorem}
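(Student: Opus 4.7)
The plan is to reduce the question to Lemma \ref{l5}, which parametrizes every dual continuous $K$-Bessel sequence of $F$ as $G = K^{\dagger}F + (\varphi\text{-correction})$ for some $\varphi \in B(H,L^{2}(\Omega))$ satisfying $T_{F}^{\ast}\varphi = 0$, with $\varphi = 0$ corresponding to the canonical dual $\tilde{F} = K^{\dagger}F$. Uniqueness of the dual is therefore equivalent to the statement that $T_{F}^{\ast}\varphi = 0$ forces $\varphi = 0$, i.e., that $T_{F}^{\ast}$ is injective on $B(H,L^{2}(\Omega))$, which holds precisely when $\mathcal{N}(T_{F}^{\ast}) = \{0\}$.

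For the sufficient direction, assume $\mathcal{R}(T_{F}) = L^{2}(\Omega)$. Then $\mathcal{N}(T_{F}^{\ast}) = \mathcal{R}(T_{F})^{\perp} = \{0\}$, so any $\varphi \in B(H,L^{2}(\Omega))$ with $T_{F}^{\ast}\varphi = 0$ satisfies $\mathcal{R}(\varphi) \subseteq \mathcal{N}(T_{F}^{\ast}) = \{0\}$, hence $\varphi = 0$. By Lemma \ref{l5} we conclude $\langle f, G(\omega) - K^{\dagger}F(\omega)\rangle = 0$ for all $f \in H$ and a.e.\ $\omega$, so $G = K^{\dagger}F = \tilde{F}$, which gives uniqueness.

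For the necessary direction, I would prove the contrapositive: assume $\mathcal{R}(T_{F}) \neq L^{2}(\Omega)$, and exhibit a second dual. The key observation is that the strict inclusion produces a nonzero $\eta \in L^{2}(\Omega)$ with $\eta \perp \mathcal{R}(T_{F})$, equivalently $T_{F}^{\ast}\eta = 0$. Pick any $h \in H \setminus \{0\}$ and define the rank-one operator $\varphi \colon H \to L^{2}(\Omega)$ by $\varphi f = \langle f,h\rangle\,\eta$. Then $\varphi$ is bounded, $T_{F}^{\ast}\varphi f = \langle f,h\rangle\, T_{F}^{\ast}\eta = 0$, and $(\varphi f)(\omega) = \langle f, \overline{\eta(\omega)}\,h\rangle$. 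Setting $G(\omega) := K^{\dagger}F(\omega) + \overline{\eta(\omega)}\,h$, the condition $(\varphi f)_{\omega} = \langle f, G(\omega) - K^{\dagger}F(\omega)\rangle$ of Lemma \ref{l5} is satisfied, so $G$ is a dual continuous $K$-Bessel sequence of $F$; since $\eta \neq 0$, it differs from $\tilde{F}$, contradicting uniqueness.

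The main technical point to verify is that the candidate $G$ built in the converse really is a Bessel mapping: its analysis operator is $T_{G} = T_{\tilde{F}} + \varphi$, a sum of two bounded operators into $L^{2}(\Omega)$, hence bounded, so the upper Bessel bound is automatic. A secondary subtlety is the standing identification between an operator $U \in B(H,L^{2}(\Omega))$ and a mapping $\omega \mapsto G(\omega) \in H$ via $(Uf)(\omega) = \langle f, G(\omega)\rangle$; this is implicit in Lemma \ref{l5} and I will use it without further comment. One should also note that $\mathcal{R}(T_{F}) \neq L^{2}(\Omega)$ is being used in the strong form that $\overline{\mathcal{R}(T_{F})} \neq L^{2}(\Omega)$ in order to extract $\eta$ — this matches the natural reading of the hypothesis and is the only point where one might have to invoke closedness of $\mathcal{R}(T_{F})$.
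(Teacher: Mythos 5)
Your proposal is correct and takes essentially the same route as the paper: sufficiency via $\mathcal{N}(T_{F}^{\ast})=\mathcal{R}(T_{F})^{\perp}=\{0\}$ forcing any two duals to coincide, and necessity via a rank-one perturbation $Q(\omega)=\tilde{F}(\omega)+\overline{\eta(\omega)}\,h$ with $0\neq\eta\perp\mathcal{R}(T_{F})$. The only differences are cosmetic --- you route both directions through Lemma \ref{l5} and take an arbitrary nonzero $h$, whereas the paper computes directly and takes $h=S^{-1}(\alpha)$ for an invertible $S\in B(H,L^{2}(\Omega))$; your choice of $h$ is if anything cleaner, and your closing remark about needing $\overline{\mathcal{R}(T_{F})}\neq L^{2}(\Omega)$ is exactly the point the paper handles by noting $KK^{\ast}=T_{F}^{\ast}T_{F}$ gives $T_{F}$ closed range.
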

\begin{proof}
Suppose that $\mathcal{R}(T_{F})=L^{2}(\Omega)$, then $T^{\ast}_{F}$ is injective. Let $G$ and $Q$ be two dual continuous $K$-Bessel sequences of $F$. Then: $\{\langle f, G(\omega)-Q(\omega)\rangle \}_{\omega \in \Omega} \in L^{2}(\Omega)$ and that :
\begin{align*}
0=Kf-Kf&=\int_{\Omega}\langle f,G(\omega)\rangle F(\omega) d\mu (\omega) - \int_{\Omega}\langle f,Q(\omega)\rangle F(\omega) d\mu (\omega)\\
&=\int_{\Omega}\langle f,G(\omega)-Q(\omega)\rangle F(\omega) d\mu (\omega) \\
&=T^{\ast}_{F}(\{\langle f, G(\omega)-Q(\omega)\rangle \}_{\omega \in \Omega}).
\end{align*}
Since, $T^{\ast}_{F}$ is injective, we have $$\langle f,G(\omega)-Q(\omega)\rangle = 0 \quad \forall \omega \in \Omega \quad and \quad \forall f\in H, $$
hence $$G(\omega)=Q(\omega) \quad \forall \omega \in \Omega,$$ so $G=Q$.\\
Conversely, assume contrarity that $\mathcal{R}(T_{F})\neq L^{2}(\Omega)$.\\
Since $F$ is a parseval continuous $K$-frame, it is easely seen that $KK^{\ast}=T^{\ast}_{F}T_{F}$.\\
Hence, $\mathcal{R}(T^{\ast}_{F})=\mathcal{R}(K)$, by lemma \ref{l2}, and $T^{\ast}_{F}$ has closed range as a consequence.\\
Let $S\in B(H,L^{2}(\Omega)) $ be an invertible operator and $0\neq \alpha \in (\mathcal{R}(T_{F}))^{\perp}$.\\
Taking $h=S^{-1}(\alpha)$ and $G(\omega)=\bar{\alpha(\omega)}h$,
for each $\omega \in \Omega$, for every $f\in H$, we have:
\begin{align*}
\int_{\Omega}|\langle f,G(\omega)\rangle|^{2}d\mu (\omega)&=\int_{\Omega}|\langle f,\bar{\alpha(\omega)}h\rangle|^{2}d\mu (\omega)\\
&=\int_{\Omega}|\langle f,h\rangle |^{2}|\alpha(\omega)|^{2}d\mu (\omega)\\
&=|\langle f,h\rangle |^{2}\|\alpha \|_{2}^{2}\\
&\leq \|\alpha \|_{2}^{2} \|h\|^{2} \|f\|^{2}.
\end{align*}
Hence, $G$ is a continuous $K$-Bessel sequence for $H$.\\
Now, let $Q(\omega)=\tilde{F}(\omega) + G(\omega)$ for every $\omega \in \Omega$, then it is easely seen that $Q$ is a continuous $K$-Bessel sequence for $H$.\\
Since $\alpha $ is orthogonal to $\mathcal{R}(T_{F})$,
\begin{align*}
\langle \int_{\Omega}\langle f,G(\omega)\rangle F(\omega)d\mu (\omega),e\rangle &= \int_{\Omega}\langle f,\bar{\alpha(\omega)}h\rangle\langle F(\omega),e\rangle d\mu (\omega)\\
&=\int_{\Omega}\langle f,h\rangle \alpha(\omega) \langle F(\omega),e\rangle d\mu (\omega)\\
&=\langle f,h\rangle \langle \alpha,\{\langle e,F(\omega)\rangle \}_{\omega \in \Omega}\rangle \\
&=\langle f,h\rangle \langle \alpha,T_{F}e\rangle = 0 \quad \forall e,f \in H.
\end{align*}
Then $\int_{\Omega}\langle f,G(\omega)\rangle F(\omega)d\mu (\omega)=0 \quad \forall f\in H$.\\
Which give:
\begin{align*}
\int_{\Omega}\langle f,Q(\omega)\rangle F(\omega)d\mu (\omega)&=\int_{\Omega}\langle f,\tilde{F}(\omega)\rangle F(\omega)d\mu (\omega) + \int_{\Omega}\langle f,G(\omega)\rangle F(\omega)d\mu (\omega)\\
&=\int_{\Omega}\langle f,\tilde{F}(\omega)\rangle F(\omega)d\mu (\omega)=Kf.
\end{align*}
Since $\alpha\neq 0$, there exists $w_{0} \in \Omega$ such that $\alpha(w_{0})\neq 0$, and thus $G(w_{0})\neq 0$. A simple calculation gives $(\frac{\alpha(w_{0})}{|\alpha(w_{0})|^{2}})S(G(w_{0}))=\alpha$.\\
	Hence $Q$ is a dual continuous $K$-Bessel sequence of $F$ and its different from $\tilde{F}$ wich is a contradiction.
\end{proof}
\begin{theorem}\label{t2}
Suppose that $K\in B(H)$ has closed range and $F$ is a parseval continuous $K$-frame of $H$ then the following results hold:
\begin{itemize}
	\item [1 -] $F$ is continuous $L^{2}$-independent if and only if $\tilde{F}$ is continuous $L^{2}$-independent.
	 \item [2 -] If $F$ admits a unique dual continuous $K$- Bessel sequence then $\tilde{F}$ admits a unique dual continuous $K^{\ast}$-Bessel sequence. 
\end{itemize}
\end{theorem}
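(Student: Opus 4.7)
The plan is to translate both parts into operator-theoretic statements about the synthesis operator $T_{\tilde F}^{\ast}$. From $\tilde F=K^{\dagger}F$ one computes directly $T_{\tilde F}=T_{F}(K^{\dagger})^{\ast}$ (this is exactly the identity $T_{G}=T_{F}(K^{\dagger})^{\ast}$ used in the proof of Theorem~\ref{l6}), and hence $T_{\tilde F}^{\ast}=K^{\dagger}T_{F}^{\ast}$. The parseval hypothesis on $F$ gives $T_{F}^{\ast}T_{F}=KK^{\ast}$, whence Lemma~\ref{l2} yields $\mathcal{R}(T_{F}^{\ast})=\mathcal{R}(K)$, which is closed because $K$ has closed range. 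Under the definition recalled in Section~2, an $L^{2}$-independent Bessel mapping is precisely one whose synthesis operator is injective, so each half of the theorem is a statement about injectivity of $T_{\tilde F}^{\ast}$.

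For part (1), the nontrivial implication is that $L^{2}$-independence of $F$ forces that of $\tilde F$. Suppose $T_{\tilde F}^{\ast}\phi=K^{\dagger}T_{F}^{\ast}\phi=0$. Then by Lemma~\ref{l1} one has $T_{F}^{\ast}\phi\in\mathcal{N}(K^{\dagger})=\mathcal{R}(K)^{\perp}$, while simultaneously $T_{F}^{\ast}\phi\in\mathcal{R}(T_{F}^{\ast})=\mathcal{R}(K)$; the intersection is trivial, so $T_{F}^{\ast}\phi=0$, and injectivity of $T_{F}^{\ast}$ gives $\phi=0$. The reverse direction is immediate: $T_{F}^{\ast}\phi=0$ forces $T_{\tilde F}^{\ast}\phi=K^{\dagger}T_{F}^{\ast}\phi=0$, and $L^{2}$-independence of $\tilde F$ yields $\phi=0$.

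For part (2) I would first invoke Theorem~\ref{t1} to turn the hypothesis into $\mathcal{R}(T_{F})=L^{2}(\Omega)$, equivalently $T_{F}^{\ast}$ injective; the argument above then shows that $T_{\tilde F}^{\ast}=K^{\dagger}T_{F}^{\ast}$ is also injective. To conclude that $\tilde F$ admits a unique dual continuous $K^{\ast}$-Bessel sequence I need both existence and uniqueness. Existence is obtained by checking that $G=F$ works: it is a continuous Bessel mapping by the parseval upper bound for $F$, and
\begin{equation*}
\int_{\Omega}\langle f,F(\omega)\rangle\,\tilde F(\omega)\,d\mu(\omega)=K^{\dagger}\!\!\int_{\Omega}\langle f,F(\omega)\rangle F(\omega)\,d\mu(\omega)=K^{\dagger}KK^{\ast}f=K^{\ast}f,
\end{equation*}
where the last equality uses that $K^{\dagger}K$ is the orthogonal projection onto $\mathcal{N}(K)^{\perp}\supset\mathcal{R}(K^{\ast})$. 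For uniqueness, if $G$ and $Q$ are both duals then $T_{\tilde F}^{\ast}(T_{G}-T_{Q})f=K^{\ast}f-K^{\ast}f=0$ for every $f\in H$, and injectivity of $T_{\tilde F}^{\ast}$ forces $T_{G}=T_{Q}$, hence $G=Q$ almost everywhere.

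The main obstacle I anticipate is navigating the switch from $K$ to $K^{\ast}$ in the statement of (2): the preceding lemma shows that $\tilde F$ is a parseval continuous $K^{\dagger}K$-frame rather than a parseval continuous $K^{\ast}$-frame, so Theorem~\ref{t1} cannot be applied verbatim to the pair $(\tilde F,K^{\ast})$. The remedy is to work directly with the algebraic reproducing identity and to exploit the projection identity $K^{\dagger}KK^{\ast}=K^{\ast}$, which is precisely what allows $F$ itself to serve as the required dual of $\tilde F$ with respect to $K^{\ast}$; once this is in place, the uniqueness half is a free consequence of the injectivity of $T_{\tilde F}^{\ast}$ already established in part~(1).
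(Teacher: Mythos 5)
Your argument is correct, and it diverges from the paper's in a way worth noting. For part (1) the paper's nontrivial direction goes through the pointwise identity $F=K\tilde F$: it first uses the $L^{2}$-independence of $F$ together with $T^{\ast}_{F}T_{F}=KK^{\ast}$ to deduce $\langle f,F(\omega)-K\tilde F(\omega)\rangle=0$, and then pushes any relation $\int_{\Omega}c(\omega)\tilde F(\omega)\,d\mu(\omega)=0$ forward by $K$. You instead stay entirely at the level of synthesis operators, writing $T_{\tilde F}^{\ast}=K^{\dagger}T_{F}^{\ast}$ and killing $T_{F}^{\ast}\phi$ by the clash $\mathcal{N}(K^{\dagger})=\mathcal{R}(K)^{\perp}$ versus $\mathcal{R}(T_{F}^{\ast})=\mathcal{R}(K)$. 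This buys you two things: you avoid the delicate step of upgrading ``$\langle f,F(\omega)-K\tilde F(\omega)\rangle=0$ for a.e.\ $\omega$ depending on $f$'' to a genuine pointwise identity $F=K\tilde F$ (a measurability/separability point the paper glosses over), and the argument is symmetric enough that both directions come out of the same two lines. For part (2) the paper stops at ``$\tilde F$ is $L^{2}$-independent, hence it has a unique dual continuous $K^{\ast}$-Bessel sequence,'' which only delivers the uniqueness half; your proposal correctly identifies that existence must also be checked and supplies it by verifying that $F$ itself is a dual of $\tilde F$ relative to $K^{\ast}$ via the projection identity $K^{\dagger}KK^{\ast}=K^{\ast}$. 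In that respect your proof is more complete than the one in the paper. Your closing worry about Theorem~\ref{t1} not applying verbatim to the pair $(\tilde F,K^{\ast})$ is well placed, and your workaround is the right one.
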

\begin{proof}
(1) One has
\begin{align*}
\int_{\Omega}\langle f,F(\omega)\rangle F(\omega)d\mu (\omega)&=T^{\ast}_{F}T_{F}f=KK^{\ast}f\\
&=\int_{\Omega}\langle K^{\ast}f,\tilde{F}(\omega)\rangle F(\omega)d\mu (\omega)\\
&=\int_{\Omega}\langle f,K\tilde{F}(\omega)\rangle F(\omega)d\mu (\omega) \qquad \forall f\in H.
\end{align*}
Hence, 
\begin{align*}
0&=\int_{\Omega}\langle f,F(\omega)\rangle F(\omega)d\mu (\omega)-\int_{\Omega}\langle f,K\tilde{F}(\omega)\rangle F(\omega)d\mu (\omega)\\
&=\int_{\Omega}\langle f,F(\omega)-K\tilde{F}(\omega)\rangle F(\omega)d\mu (\omega).
\end{align*}
Since, $F$ is continuous $L^{2}$-independent, it is follows that:
\begin{equation*}
\langle f,F(\omega)-K\tilde{F}(\omega)\rangle = 0 \qquad \forall f\in H, \quad \forall \omega \in \Omega.
\end{equation*} 
So $F=K\tilde{F}$.\\
Suppose now that $\int_{\Omega}c(\omega)\tilde{F}(\omega)d\mu (\omega)=0 \quad $ for some $c\in L^{2}(\Omega)$, then :
\begin{equation*}
0=K\int_{\Omega}c(\omega)\tilde{F}(\omega)d\mu (\omega)=\int_{\Omega}c(\omega)K\tilde{F}(\omega)d\mu (\omega)=\int_{\Omega}c(\omega)F(\omega)d\mu (\omega).
\end{equation*}
So, $c(\omega)=0 \qquad \forall \omega \in \Omega$, because $F$ is continuous $L^{2}(\Omega)$ independent.\\
For the converse, let $\int_{\Omega}c(\omega)F(\omega)d\mu (\omega)=0\quad $ for  $c \in L^{2}(\Omega)$, then: 
\begin{equation*}
0=K^{\dagger}\int_{\Omega}c(\omega)F(\omega)d\mu (\omega)=\int_{\Omega}c(\omega)K^{\dagger}F(\omega)d\mu (\omega)=\int_{\Omega}c(\omega)\tilde{F}(\omega)d\mu (\omega)
\end{equation*}
Then $c(\omega) = 0 \qquad \forall \omega \in \Omega$.\\
(2) Since, $F$ has a unique dual continuous $K$-Bessel, by theorem \ref{t1} we know that its analysis operator $T_{F}$ is surjective and  thus $T^{\ast}_{F}$ is injective, which implies that $F$ is continuous $L^{2}$-independent.\\
Hence, by (1), $\tilde{F}$ is also continuous $L^{2}$-independent, from wich we conclude that $\tilde{F}$ has a unique dual continuous $K^{\ast}$-Bessel sequence.
\end{proof}

\begin{theorem}\label{t4}
Suppose that $K\in B(H)$ has closed range and $F$ is a parseval continuous $K$-frame of $H$. Then for any $c\in L^{2}(\Omega)$ satisfying the equation:\\
$Kf=\int_{\Omega}c(\omega)F(\omega)d\mu (\omega)$, we have:
\begin{equation*}
\int_{\Omega}|c(\omega)|^{2}d\mu (\omega)=\int_{\Omega}|c(\omega)-\langle f,\tilde{F}(\omega)\rangle |^{2}d\mu (\omega) + \int_{\Omega}|\langle f,\tilde{F}(\omega)\rangle|^{2}d\mu (\omega).
\end{equation*}
\end{theorem}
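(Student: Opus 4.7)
The plan is to recognize the claimed identity as a Pythagorean decomposition in $L^{2}(\Omega)$. Writing $v(\omega):=\langle f,\tilde{F}(\omega)\rangle$, the equality to be shown reads $\|c\|_{L^{2}(\Omega)}^{2}=\|c-v\|_{L^{2}(\Omega)}^{2}+\|v\|_{L^{2}(\Omega)}^{2}$, which, by expanding $\|c\|^{2}=\|(c-v)+v\|^{2}$, is equivalent to the single orthogonality relation $\langle c-v,v\rangle_{L^{2}(\Omega)}=0$. So the whole task reduces to verifying this one identity in $L^{2}(\Omega)$.

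First, I would express $v$ through the analysis operator $T_{F}$. Since $\tilde{F}=K^{\dagger}F$, one has $\langle f,\tilde{F}(\omega)\rangle=\langle (K^{\dagger})^{\ast}f,F(\omega)\rangle$, hence $v=T_{F}(K^{\dagger})^{\ast}f\in L^{2}(\Omega)$ (the $L^{2}$-membership is automatic from the Bessel property of $F$). Moving the $L^{2}$-inner product to $H$ via the adjoint then gives $\langle c-v,v\rangle_{L^{2}(\Omega)}=\langle T_{F}^{\ast}(c-v),(K^{\dagger})^{\ast}f\rangle_{H}$, so it is enough to prove $T_{F}^{\ast}(c-v)=0$.

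Next, I would compute the two pieces of $T_{F}^{\ast}(c-v)$ separately. By the hypothesis on $c$, $T_{F}^{\ast}c=\int_{\Omega}c(\omega)F(\omega)\,d\mu(\omega)=Kf$. Since $F$ is Parseval, the operator identity $T_{F}^{\ast}T_{F}=KK^{\ast}$ (the same one used in the proof of Theorem \ref{t1}) yields $T_{F}^{\ast}v=T_{F}^{\ast}T_{F}(K^{\dagger})^{\ast}f=KK^{\ast}(K^{\dagger})^{\ast}f=K(K^{\dagger}K)^{\ast}f=KK^{\dagger}Kf=Kf$, where I have used $(K^{\dagger}K)^{\ast}=K^{\dagger}K$ and $KK^{\dagger}K=K$ from Lemma \ref{l1}. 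Subtracting gives $T_{F}^{\ast}(c-v)=Kf-Kf=0$, whence $\langle c-v,v\rangle_{L^{2}(\Omega)}=0$ and the Pythagorean identity follows.

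The only conceptual step is the initial recognition of the claim as a Pythagorean decomposition in $L^{2}(\Omega)$; once this is seen, the verification reduces to assembling the hypothesis $T_{F}^{\ast}c=Kf$, the Parseval relation $T_{F}^{\ast}T_{F}=KK^{\ast}$, and the standard Moore--Penrose identities from Lemma \ref{l1}. I anticipate no real obstacle, since every ingredient is already in place in the preceding sections.
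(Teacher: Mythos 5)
Your proposal is correct and follows essentially the same route as the paper: both reduce the identity to the vanishing of the cross term $\langle c-v,v\rangle_{L^{2}(\Omega)}$ with $v(\omega)=\langle f,\tilde{F}(\omega)\rangle$, and both verify this by observing that $c$ and $v$ each reproduce $Kf$ under $T_{F}^{\ast}$ (the paper pairs against $f$ after factoring out $K^{\dagger}$, you pair against $(K^{\dagger})^{\ast}f$ directly, which is the same computation by adjointness).
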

\begin{proof}
	We have :
	\begin{align*}
	\int_{\Omega}(c(\omega)-\langle f,\tilde{F}(\omega)\rangle) \langle \tilde{F}(\omega),f\rangle d\mu (\omega) &=\int_{\Omega} \langle (c(\omega)-\langle f,\tilde{F}(\omega)\rangle)\tilde{F}(\omega),f\rangle d\mu (\omega)\\
	&=\langle \int_{\Omega} (c(\omega)-\langle f,\tilde{F}(\omega)\rangle)\tilde{F}(\omega),f\rangle d\mu (\omega)\\
	&=\langle K^{\dagger}\int_{\Omega} (c(\omega)-\langle f,\tilde{F}(\omega)\rangle)\tilde{F}(\omega),f\rangle d\mu (\omega)\\
	&=\langle K^{\dagger}(Kf-Kf),f\rangle =0 \qquad \forall f\in H.
	\end{align*}
	Therefore 
	\begin{align*}
	\int_{\Omega}|c(\omega)|^{2}d\mu (\omega)&=\int_{\Omega}c(\omega)\overline{c(\omega)}d\mu (\omega)\\
	&=\int_{\Omega}[(c(\omega)-\langle f,\tilde{F}(\omega)\rangle) +\langle \tilde{F}(\omega),f\rangle] \overline{[(c(\omega)-\langle f,\tilde{F}(\omega)\rangle) +\langle f,\tilde{F}(\omega)\rangle]}d\mu (\omega)\\
	&=\int_{\Omega}(((c(\omega)-\langle f,\tilde{F}(\omega)\rangle) \overline{(c(\omega)-\langle f,\tilde{F}(\omega)\rangle)} + (c(\omega)-\langle f,\tilde{F}(\omega)\rangle)\langle \tilde{F}(\omega),f\rangle  \\
	&+\langle f,\tilde{F}(\omega)\rangle \overline{(c(\omega)-\langle f,\tilde{F}(\omega)\rangle)} + \langle f,\tilde{F}(\omega)\rangle \langle \tilde{F}(\omega),f\rangle))d\mu (\omega)\\
	&=\int_{\Omega}(c(\omega)-\langle f,\tilde{F}(\omega)\rangle)\overline{(c(\omega)-\langle f,\tilde{F}(\omega)\rangle)}d\mu (\omega) + \int_{\Omega}\langle f,\tilde{F}(\omega)\rangle \langle \tilde{F}(\omega),f\rangle d\mu (\omega) \\
	&=\int_{\Omega}|c(\omega)-\langle f,\tilde{F}(\omega)\rangle|^{2} d\mu (\omega)+ \int_{\Omega}|\langle f,\tilde{F}(\omega)\rangle|^{2}d\mu (\omega).
	\end{align*}
\end{proof}
{\bf Conflict of interest}
On behalf of all authors, the corresponding author states that there is no conflict of interest.

{\bf Data availability statement:}  No data were used to support this study.

{\bf Funding statement:}  This study was not funded.
\bibliographystyle{amsplain}

\end{document}